\documentclass[12 pt, a4paper]{article}

\usepackage{amsfonts}

\usepackage{epsfig}
\usepackage{enumitem}
\usepackage{psfrag}

\newcommand{\fracs}[2]{{ \textstyle \frac{#1}{#2} }}
\newtheorem{theorem}{Theorem}[section]
\newtheorem{conjecture}{Conjecture}[section]
\newtheorem{corollary}{Corollary}[section]

\newtheorem{remark}{Remark}[section]
\newenvironment{proof}{
 \bgroup\noindent\small{\bf Proof\ }}{
 \nolinebreak\hbox{\ $\Box$}
 \egroup}

\newcommand{\dA}{\, {\rm d} A}
\newcommand{\dW}{\, {\rm d} W}
\newcommand{\dM}{\, {\rm d} M}
\newcommand{\dt}{\, {\rm d} t}
\newcommand{\ds}{\, {\rm d} s}
\newcommand{\dX}{\, {\rm d} X}

\newcommand{\dv}{\, {\rm d} v}

\newcommand{\e}{\, {\rm e}}

\newcommand{\eps}{\varepsilon}

\newcommand{\Ito}{It{\^o} }

\newcommand{\hL}{\widehat{L}}
\newcommand{\hP}{\widehat{P}}
\newcommand{\hY}{\widehat{Y}}

\newcommand{\hU}{\widehat{U}}

\newcommand{\EE}{\mathbb{E}}
\newcommand{\VV}{\mathbb{V}}
\newcommand{\RR}{\mathbb{R}}

\begin{document}

\title{Stochastic finite differences and multilevel Monte Carlo for a class of SPDEs in finance}
\author{Michael B.~Giles and Christoph Reisinger\\[0.1in]
        Oxford-Man Institute of Quantitative Finance\\
        and Mathematical Institute, University of Oxford}

\maketitle

\begin{abstract}
 In this article, we propose a Milstein finite
difference scheme for a stochastic partial differential equation (SPDE)
describing a large particle system.
%Multilevel path simulation has been shown to reduce the computational complexity  when estimating expectations that depend on the paths of stochastic differential equations (SDEs).  
We show, by means of Fourier analysis, that the discretisation on an 
unbounded domain is convergent of first 
order in the timestep and second order in the spatial grid size, and that
the discretisation is stable with respect to boundary data.
Numerical experiments clearly
indicate that the same convergence order also holds for boundary-value problems.
Multilevel path simulation, previously used for 
SDEs, is shown to give substantial complexity gains compared to a standard 
discretisation of the SPDE or direct simulation
of the particle system.
We derive complexity bounds and illustrate the results by an application to 
basket credit derivatives.
\end{abstract}

\section{Introduction}

%\subsection{Background}

Various stochastic partial differential equations (SPDEs) have emerged over the last two decades in different areas of mathematical finance.
A classical example is the Heath-Jarrow-Morton interest rate model \cite{heath92} of the form
\begin{eqnarray}
\label{hjm}
{\rm d} r(x,t) &=& \frac{\partial}{\partial x}\left(r(t,x) + 
\frac{1}{2} \left| \textstyle\int_t^{t+x}\sigma(t,u){\,\rm d}u\right|^2\right) {\,\rm d}t + \sigma(t,t+x) {\,\rm d}W_t,
\end{eqnarray}
where $r(x,t)$ is the forward rate of tenor $x$ at time $t$ and $\sigma(t,t+x)$ its instantaneous
volatility.
In \cite{bk08},
a similar equation has been proposed more recently to model electricity forwards.
%Another recent application is given in \cite{}, where
%\begin{eqnarray*}
%{\rm d} v(x,t) &=& 
%\end{eqnarray*}
%is derived for...
Most of the SPDEs studied share with (\ref{hjm}) the property that the derivatives of the solution only appear in the drift term; in the case of (\ref{hjm}) the volatility of the Brownian driver does not depend on the solution $r$ at all.
Numerical methods for hyperbolic SPDEs of the type (\ref{hjm}) have been
studied, for example, in \cite{roth02}.

This article, in contrast, considers the parabolic SPDE
\begin{equation}
\label{spde}
\dv = -\mu\, \frac{\partial v}{\partial x} \dt 
+ \frac{1}{2} \, \frac{\partial^2  v }{\partial x^2} \dt
- \sqrt{\rho}\, \frac{\partial v}{\partial x} \dM_t,
\end{equation}
where $M$ is a standard Brownian motion, and $\mu$ and $0\le \rho\le 1$ are real-valued parameters.
It is clear that the behaviour of this equation is fundamentally different from those with additive or multiplicative noise.

The significance of (\ref{spde}) for the following applications is that it describes the limiting density of a large system of exchangeable particles.
Specifically, if we consider the system of SDEs
\begin{equation}
\label{sdesys}
\dX_t^i = \mu \dt + \sqrt{1 \!-\! \rho}\, \dW_t^i + \sqrt{\rho}\, \dM_t, 
%\qquad X_0^i = x^i
\end{equation}
for $1\le i \le N$, with $\langle \dW_t^i, \dW_t^j\rangle = \delta_{ij}$ and
$\langle \dW_t^i, \dM_t\rangle = 0$, where 
$X_0^i$ are assumed i.i.d.\ with finite second moment,
the empirical measure
\[
\nu_t^N = \frac{1}{N} \sum_{i=1}^N \delta(\cdot-X_t^i)
\]
has a limit $\nu_t$ for $N\rightarrow \infty$,
whose density $v$ satisfies (\ref{spde}) in a weak sense. For a derivation
of this result in the more general context of quasi-linear PDEs see \cite{kx99}. 
While the motivation in \cite{kx99} is to use a large particle system (\ref{sdesys}) to approximate the solution to the SPDE (\ref{spde}),
our view point is to use (\ref{spde}) as an approximate model for a large particle system,
and we will argue later the (computational) advantages of this approach in situations when
the number of particles is large.

As a first possible application, one may consider $X^i$ as the log price processes of a basket of equities, which have idiosyncratic components $W^i$ and share a common driver $M$ (the ``market''). If the size of the basket is large enough, the solution to the  SPDE can be used to find the values of basket derivatives. In this paper, we study an application of a similar model to basket credit derivatives.

We mention in passing that equations of the form (\ref{spde}) arise also in stochastic filtering. To be precise, (\ref{spde}) is the Zakai equation for the distribution of a signal $X$ given observation of $M$, see e.g.\ \cite{crisan}.%, but we will not consider such applications here.

It is interesting to note that the solution to the SPDE (\ref{spde}) without boundary conditions can be written as the solution of the PDE
\begin{equation}
\frac{\partial u}{\partial t} = \frac{1}{2} (1\!-\!\rho)\, \frac{\partial^2 u}{\partial x^2}
 - \mu\, \frac{\partial u}{\partial x},
\label{pde}
\end{equation}
shifted by the current value of the Brownian driver,
\begin{equation}
\label{shift}
v(t,x) = u(t, x\!-\!\sqrt{\rho}\,M_t).
\end{equation}
In particular, if $v(0,x)=\delta(x_0\!-\!x)$, then
\begin{equation}
v(T,x) = \frac{1}{\sqrt{2\pi \, (1\!-\!\rho)\,T}}\ 
\exp\left( -\ \frac{(\rule{0in}{0.16in}x - x_0 - \mu\, T - \sqrt{\rho} \, M_T)^2}{2\,(1\!-\!\rho)\,T} \right).
\label{eq:gaussian}
\end{equation}

The intuitive interpretation of this result is that the independent Brownian motions $W^i$ have averaged into a deterministic diffusion in the infinite particle limit, whereas the common factor $M$, which moves all processes in parallel, shifts the
whole profile (and also adds to the diffusion, via the It\^{o} term).

In \cite{bush}, the analysis of the large particle system is extended to cases with absorption at the boundary ($x=0$),
\begin{eqnarray}
\nonumber
X_t^i &=& 0,\quad t\ge T_0^i, \\
T_0^i &=& \inf \{t: X_t^i=0\}.
\label{defaulttime}
\end{eqnarray}
It is shown
that there is still a limit measure $\nu_t$, which may now be decomposed as
\[
\nu_t = \nu_t^+ + L_t \delta_0, 
\]
where
\begin{equation}
\label{loss-var}
L_t = \lim_{N\rightarrow \infty} L_t^N =
\lim_{N\rightarrow \infty} \frac{1}{N} \sum_{i=1}^N 1_{T_0^i\le t}
\end{equation}
is the proportion of absorbed particles (the ``loss function''), and
the density $v$ of $\nu_t^+$ satisfies (\ref{spde}) in $(0,\infty)$ with
absorbing boundary condition
\begin{equation}
\label{bc}
v(t,0) = 0.
\end{equation}

\cite{bush} consider applications to basket credit derivatives.
For the market pricing examples, they consider a simplified model, where
defaults are monitored only at a discrete set of dates.
% $T_k$, and applied to the firm value density as $v(T_l,x)=0$ if $x\le 0$.
Between these times, the default barrier is inactive and (\ref{spde}) is solved on the real line by using (\ref{pde}) and (\ref{shift}).

For the initial-boundary value problem (\ref{spde}), (\ref{bc}), however, such a semi-analytic solution strategy is no longer possible and an efficient numerical method is needed.
Moreover, there is a loss of regularity at the boundary in this case, 
such that $x u_{xx} \in L_2$ but not $u_{xx}$, as is documented in \cite{krylov94}.

Recent papers on the numerical solution of SPDEs deal with cases relevant to ours, yet structurally crucially different.
%\cite{roth02} proposes finite difference methods for hyperbolic SPDEs in the general class
%\[
%{\rm d} v(t,x) = F(v,v_x) \dt + G(v,v_x) \dW_t,
%\]
%and gives a convergence proof for the special case
%\begin{eqnarray*}
%{\rm d} v(t,x) = a v_x %\frac{\partial v}{\partial x} 
%\dt + b v \dW_t.
%\end{eqnarray*}
%(error in proof?? -- see definition of consistency)
A comprehensive analysis of finite difference and finite element discretisations of the stochastic heat equation with multiplicative white noise and non-linear driving term is given in \cite{gyongy97, gyongy} and \cite{walsh}, respectively.
\cite{lang10} shows a Lax equivalence theorem for the SDE
\[
{\rm d} X_t = A X_t \dt + G(X_t) \dM_t,
\]
in a Hilbert space, driven by a process $M$ from a class including Brownian motion, where $A$ is a suitable (e.g.\ elliptic differential) operator and $G$ a Lipschitz function.

% \cite{kh10} formulate and  analyse an explicit and a Crank-Nicolson-type
% finite difference scheme, with explicit Euler treatment of the stochastic integral, for the SPDE
% \[
% {\rm d} v(t,x) = \left(a v_{xx} + b v_x + c v\right) \dt + \left(d v_x + \gamma v\right) \dW_t,
% \]
% giving conditions for consistency  and stability in a mean-square sense.\footnote{The results there appear to contradict the known convergence criteria for the special case ($d=\gamma=0$) of deterministic parabolic PDEs, specifically, the proof of Theorem 3.1 would imply unconditional stability for the heat equation ($a=-1$, $b=c=0$)
% with amplification factor 1 in the maximum norm, which is false (see e.g.\ \cite{mm05}, p.~36, and \cite{kraa92}).}

In this paper, we propose a Milstein finite difference discretisation for (\ref{spde}) and analyse its stability and convergence in the mean-square sense by Fourier analysis. 
%We extend the method to an implicit scheme and show unconditional stability for a relevant range of model parameters.
A main consideration of this paper is the computational complexity of the proposed methods, and we will
demonstrate that a multilevel approach achieves a cost for the SPDE simulation no larger than that  of direct Monte Carlo sampling from a known univariate distribution, $O(\eps^{-2})$ for r.m.s.~accuracy $\epsilon$, and is in that sense optimal.

Multilevel Monte Carlo path simulation, first introduced in \cite{giles1}, 
is an efficient technique for computing expected values of path-dependent payoffs
arising from the solution of SDEs.  It is based on a multilevel decomposition 
of Brownian paths, similar to a Brownian Bridge construction. The complexity 
gain can be explained by the observation that the variance of high-level 
corrections -- involving a large number of timesteps -- is typically small, 
and consequently only a relatively small number Monte Carlo samples is required 
to estimate these contributions to an acceptable accuracy.  Overall, for SDEs,
if a r.m.s.~accuracy of $\eps$ is required, the standard Monte Carlo method 
requires $O(\eps^{-3})$ operations, whereas the multilevel method based on the 
Milstein discretisation \cite{giles2} requires $O(\eps^{-2})$ operations.

%First extensions of the multilevel approach to SPDEs have been proposed since 
%taking up the present research.
%\cite{cliffe, charrier} as well as \cite{barth}
%use a multi-level approach for the estimation
%of functionals of the finite element solution to elliptic PDEs with random 
%coefficients.
%\cite{barth2} extend this to a parabolic equation with multiplicative noise 
%term and with Euler-Maruyama (and Milstein) timestepping.

The first extension of the multilevel approach to SPDEs was for parabolic 
PDEs with a multiplicative noise term \cite{graubner}.
There have also been recent extensions to 
elliptic PDEs with random coefficients \cite{barth,cliffe}.

%Our approach, albeit for a different equation, is similar to the latter method 
%in that the solution is decomposed 
%into a hierarchy with decreasing timestep \emph{and} mesh size.

Our approach, for a rather different parabolic SPDE, is similar to the
previous work on SDEs and SPDEs in that the solution is decomposed 
into a hierarchy with increasing resolution in both time and space.
Provided the variance 
of the multilevel corrections decreases at a sufficiently high rate as one moves to higher levels of refinement,
the number of fine grid Monte Carlo simulations which is required is greatly reduced.
Indeed, the total cost is only 
$O(\eps^{-2})$ to achieve a r.m.s.~accuracy of 
$\eps$ compared to an $O(\eps^{-7/2})$ cost for the standard approach 
which combines a finite difference discretisation of the spatial derivative terms 
and a Milstein discretisation of the stochastic integrals.

The rest of the paper is structured as follows. 
%In section \ref{sec:model},  we review the SPDE model and explain how CDOs can be priced in this framework. 
Section \ref{sec:finite-difference} outlines the finite difference scheme
used, and analyses its accuracy and stability in the standard Monte Carlo approach.
%Section \ref{sec:standard-mc} presents the standard Monte Carlo approach
%and s
Section \ref{sec:multilevel} presents the modification to multilevel path simulation of functionals of the solution.
Numerical experiments for a CDO tranche pricing application
are given in section \ref{sec:results}, providing 
empirical support for the postulated properties of the scheme and demonstrating
the computational gains achieved.
Section \ref{sec:conclusions} discusses the benefits over standard Monte Carlo simulation
of particle systems and outlines extensions.

\section{Discretisation and convergence analysis}
\label{sec:finite-difference}

\subsection{Milstein finite differences}

Integrating (\ref{spde}) over the time interval $[t,t\!+\!k]$ gives
\[
v(t\!+\!k,x) = v(t,x) + \int_t^{t+k} \left( -\,  \mu \frac{\partial v}{\partial x}
                + \frac{1}{2}\, \frac{\partial^2  v }{\partial x^2}\right) \ds - 
          \int_t^{t+k} \!\!\sqrt{\rho}\ \frac{\partial v}{\partial x}\,  \dM_s.
\]
Making the approximation $v(s,x) \!\approx\! v(t,x)$ for $t\!<\!s\!<\!t\!+\!k$ 
in the first integral and
\[
v(s,x) \approx v(t,x) - \sqrt{\rho} \left.\frac{\partial v}{\partial x}\right|_{(t,x)} (M_s\!-\!M_t)
\]
in the second, and noting the standard \Ito calculus result that 
\[
\int_t^{t+k} (M_s\!-\!M_t)\,  \dM_s = \frac{1}{2} \left(\rule{0in}{0.16in}(\Delta M^n)^2 \!-\! k\right),
\]
where $\Delta M^n \equiv M_{t+k}-M_t = \sqrt{k}\, Z_n$ with $Z_n \sim N(0,1)$, \cite{glasserman,kp92},
leads to the Milstein semi-discrete approximation
\[
v^{n+1}(x) = v^n(x) - (\mu\, k +\sqrt{\rho\, k}\, Z_n)\, \frac{\partial v^n}{\partial x} 
                + \frac{1}{2} \left(\rule{0in}{0.16in} (1\!-\!\rho)\, k + \rho \, k\, Z_n^2\right)
                 \frac{\partial^2  v^n }{\partial x^2}.
\]
Using a spatial grid with uniform spacing $h$, standard central difference approximations 
to the spatial derivatives \cite{rm67} then give the finite difference equation
\begin{eqnarray}
v_j^{n+1} &=& v_j^n\ -\ \frac{\mu\, k + \sqrt{\rho\, k}\, Z_n}{2h} \left(v_{j+1}^n - v_{j-1}^n\right) 
\nonumber \\&&~~~~ +\ \frac{(1\!-\!\rho)\, k + \rho \, k\, Z_n^2}{2h^2} 
 \left(v_{j+1}^n - 2 v_j^n + v_{j-1}^n\right),
\label{discrete}
\end{eqnarray}
in which $v_j^n$ is an approximation to $v(nk,jh)$. 

The spatial domain is truncated by introducing 
an upper boundary at $x_{max} \!=\! J\,h$ and using the boundary condition $v_J^n\!=\!0$.  
Since the initial distribution will be assumed localised,  both the localisation error for a given path 
$M$, and the expected error of functionals of the solution, can be made as small as needed by a large enough choice $x_{max}>0$. 

The system of SDEs can then be written in matrix-vector form
\begin{eqnarray}
V_{n+1} &=& V_n\ -\ \frac{\mu\, k + \sqrt{\rho\, k}\, Z_n}{2h} D_1 V_n
+\ \frac{(1\!-\!\rho)\, k + \rho \, k\, Z_n^2}{2 h^2}  D_2 V_n,
\label{discrete-sys}
\end{eqnarray}
where $V_n$ is the vector with elements $v_j^n, j=1, \ldots, J\!-\!1$ and $D_1$ and $D_2$ are
the matrices corresponding to first and second central differences as explicitly given in
Appendix \ref{sec:matanal}.

\begin{remark}
\label{remark-mol}
An alternative discretisation arises if the spatial derivatives are discretised first, and the Milstein scheme is applied to the resulting system of SDEs. The practical implication is that the \Ito term then contains a second finite difference with twice the step size, $D_1^2$ instead of $D_2$, resulting in pentadiagonal discretisation matrices instead of tridiagonal ones, specifically
\begin{eqnarray}
V_{n+1} = V_n - \frac{\mu\, k + \sqrt{\rho\, k}\, Z_n}{2h} D_1 V_n
+  \frac{k}{2 h^2}  D_2 V_n
+  \frac{(\rho \, k\, (Z_n^2-1)}{2 h^2}  D_1^2 V_n.
\label{discrete-sys2}
\end{eqnarray}
The accuracy and stability of the two schemes are similar, hence we will not go into details. 
\end{remark}

To approximate the initial condition on the grid, we apply the initial
measure $\nu^N_0$ to a basis of `hat functions'
$\left\langle\Psi_j\right\rangle_{0<j<J}$, where
\[
\Psi_j(x) = \frac{1}{h} 
\max \left( h-|x-x_j|,0 \right),
%\min\left(\rule{0in}{0.16in} \max(x\!+\!h\!-\!x_j,0),\ \max(x_j\!+\!h\!-\!x,0) \right),
\]
giving
\[
v_j^0 = \langle \Psi_j, v_0 \rangle =
%\int_{0}^{x_{max}}  
\int_{-\infty}^{\infty}
\Psi_j(x)\ v_0(x) {\, \rm d}x.
\]
For the corresponding PDE ($\rho=0$), this is known to result in $O(h^2)$ 
convergence provided $k$ satisfies a certain stability limit, even when the
initial data are not smooth
\cite{pooley1, carter}.
We will see that an extension of this analysis holds for SPDEs.

\subsection{Fourier stability analysis}

Finite difference Fourier stability analysis ignores the boundary conditions and considers
Fourier modes of the form
\begin{equation}
\label{fouriermode}
v_j^n = g_n \exp(i j \theta), \quad |\theta| \leq \pi,
\end{equation}
which satisfy (\ref{discrete}) provided
\[
g_{n+1} = \left( a(\theta) + b(\theta)\, Z_n + c(\theta)\, Z_n^2 \right) g_n,
\]
where
\begin{eqnarray*}
a(\theta) &=& 1 - \frac{i \, \mu\, k}{h}\, \sin \theta
 - \frac{2\, (1\!-\!\rho)\, k}{h^2} \sin^2 \fracs{\theta}{2}, \\[0.05in]
b(\theta) &=& -\, \frac{i \sqrt{\rho\, k}}{h}\, \sin \theta, \\[0.05in]
c(\theta) &=& -\, \frac{2\, \rho\, k}{h^2} \sin^2 \fracs{\theta}{2}.
\end{eqnarray*}
Following the approach of mean-square stability analysis from \cite{higham, sm96}, we obtain
\begin{eqnarray*}
\EE[\, |g_{n+1}|^2 ] 
&=& \EE\left[ (a + b\, Z_n + c\, Z_n^2)(a^* + b^* Z_n + c^* Z_n^2)\  |g_n|^2 \right] \\[0.1in]
&=& \left( \, |a\!+\!c|^2 + |b|^2 + 2 |c|^2 \,\right)\ \EE\left[\, |g_n|^2 \right],
\end{eqnarray*}
where $a^*$ denotes the complex conjugate of $a$.  Mean-square stability therefore requires
\begin{eqnarray*}
&& |a|^2 + |b|^2 + 3 |c|^2 + a\, c^* + a^* c \\
&& =\ 1
 - 4 \sin^2 \fracs{\theta}{2} \left\{ \frac{k}{h^2} 
- (1 + 2 \rho^2) \left(\frac{k}{h^2}\right)^2 \sin^2\fracs{\theta}{2}
-  \left( \left(\frac{\mu\,k}{h}\right)^2 + \frac{\rho\, k}{h^2} \right) \cos^2\fracs{\theta}{2}
\right\} \\
&& \leq \ 1,
\end{eqnarray*}
and enforcing this for all $\theta$ leads to 
the two conditions summarised in the following theorem.

\begin{theorem}
The Milstein finite difference scheme (\ref{discrete-sys}) is stable in the mean-square sense provided
\begin{eqnarray}
\mu^2 k &\leq& 1 - \rho, \\
\frac{k}{h^2} &\leq& (1+2\rho^2)^{-1}.
\label{expl-stab-lim}
\end{eqnarray}
\end{theorem}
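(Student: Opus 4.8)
The plan is to take the inequality already derived just before the theorem statement at face value and extract from it exactly the two stated conditions. The key expression is
\[
|a|^2 + |b|^2 + 3|c|^2 + a\,c^* + a^*c
= 1 - 4 \sin^2\fracs{\theta}{2}\, \Phi(\theta),
\]
where
\[
\Phi(\theta) = \frac{k}{h^2} - (1+2\rho^2)\left(\frac{k}{h^2}\right)^2 \sin^2\fracs{\theta}{2}
- \left(\left(\frac{\mu k}{h}\right)^2 + \frac{\rho k}{h^2}\right)\cos^2\fracs{\theta}{2}.
\]
Mean-square stability, i.e.\ $\EE[|g_{n+1}|^2] \le \EE[|g_n|^2]$ for every mode, is equivalent to requiring $\Phi(\theta) \ge 0$ for all $|\theta| \le \pi$ (the prefactor $4\sin^2(\theta/2)$ being nonnegative, and vanishing only at $\theta = 0$ where there is nothing to prove). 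So the whole task reduces to: find necessary and sufficient conditions on $k/h^2$ and $\mu^2 k$ for $\Phi \ge 0$ on $[-\pi,\pi]$.

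First I would substitute $s = \sin^2(\theta/2) \in [0,1]$, so that $\cos^2(\theta/2) = 1-s$, and write $\Phi$ as a function of $s$ alone. Collecting terms, $\Phi$ is affine... actually quadratic in $s$ through the middle term, but the coefficient structure is simple: it is a downward-opening parabola in $s$ (the $s^2$ coefficient $-(1+2\rho^2)(k/h^2)^2$ is negative), so its minimum over the interval $[0,1]$ is attained at one of the endpoints $s=0$ or $s=1$. Hence $\Phi \ge 0$ on $[0,1]$ if and only if $\Phi(0) \ge 0$ and $\Phi(1) \ge 0$. Evaluating: $\Phi(0) = \frac{k}{h^2} - \left(\frac{\mu k}{h}\right)^2 - \frac{\rho k}{h^2} = \frac{k}{h^2}\left(1 - \rho - \mu^2 k\right)$, which (for $k/h^2 > 0$) gives exactly $\mu^2 k \le 1-\rho$. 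And $\Phi(1) = \frac{k}{h^2} - (1+2\rho^2)\left(\frac{k}{h^2}\right)^2 = \frac{k}{h^2}\left(1 - (1+2\rho^2)\frac{k}{h^2}\right)$, which gives exactly $\frac{k}{h^2} \le (1+2\rho^2)^{-1}$. These are the two conditions in the theorem.

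I would then close the loop by noting sufficiency and necessity: if both conditions hold then $\Phi(0)\ge 0$ and $\Phi(1)\ge0$, hence $\Phi \ge 0$ on all of $[0,1]$ by the concavity argument, hence $\EE[|g_{n+1}|^2]\le\EE[|g_n|^2]$ for all modes, which iterates to give boundedness of $\EE[|g_n|^2]$ and thus mean-square stability; conversely, stability forces $\Phi(0),\Phi(1)\ge 0$, i.e.\ the two conditions. The only mildly delicate point — and the one I would be most careful about — is the reduction to endpoints: one must check the sign of the leading coefficient in $s$ is indeed negative (it is, since $1+2\rho^2>0$) so that the restriction of a concave function to an interval has its minimum at an endpoint; degenerate cases $\rho=0$ or $k=0$ are trivial and can be mentioned in passing. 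Everything else is bookkeeping. No other genuine obstacle is expected, since the heavy lifting — computing $\EE[|g_{n+1}|^2]$ using $\EE[Z]=0$, $\EE[Z^2]=1$, $\EE[Z^3]=0$, $\EE[Z^4]=3$ and assembling the displayed inequality — has already been carried out in the text preceding the statement.
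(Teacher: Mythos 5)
Your proposal is correct and follows essentially the paper's own route: the paper derives the same displayed amplification-factor inequality $|a|^2+|b|^2+3|c|^2+ac^*+a^*c\le 1$ and simply asserts that enforcing it for all $\theta$ yields the two conditions, which is exactly the endpoint check you carry out. One small correction to your write-up: after substituting $s=\sin^2\fracs{\theta}{2}$ the bracketed expression $\Phi$ is \emph{affine} in $s$, not quadratic (the term $(1+2\rho^2)(k/h^2)^2\sin^2\fracs{\theta}{2}$ is linear in $s$), so the minimum-at-an-endpoint conclusion holds even more trivially than your downward-opening-parabola argument suggests, and the necessity direction at $s=0$ follows by continuity just as you say.
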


The analysis in Appendix \ref{sec:matanal} combines mean-square and matrix stability analysis
to prove that in the limit $k,h\rightarrow 0$ the condition 
$k/h^2 \leq (1+2\rho^2)^{-1}$ is also a sufficient condition for mean-square stability
of the initial-boundary value problem with the boundary conditions at $j\!=\!0$ and $j\!=\!J$.

\subsection{Fourier analysis of accuracy}
\label{subsec:fourier-acc}

Fourier analysis can also be used to examine the accuracy of the finite difference 
approximation in the absence of boundary conditions.
Considering a Fourier mode of the form
$
g(t) \exp( i \kappa x),
$
the PDE (\ref{pde}) reveals that
\[
g(t) = g(0) \ \exp\left(\rule{0in}{0.16in}\! -\fracs{1}{2} (1\!-\!\rho)\, \kappa^2\, t
        - i \, \kappa\, (\mu\, t + \sqrt{\rho}\, M_t) \right),
\]
and therefore
\[
g(t_{n+1}) = g(t_n) \ \exp\left(\rule{0in}{0.16in}\! -\fracs{1}{2} (1\!-\!\rho)\, \kappa^2 \, k
        - i \,\kappa\, (\mu\, k \!+\! \sqrt{\rho\,k}\, Z_n) \right),
\]
where $M_{t_{n+1}}- M_{t_n} = \sqrt{k}\, Z_n$.
Fourier analysis of the discretisation gives
\[
g_{n+1} = \left( a(\kappa h) + b(\kappa h)\, Z_n + c(\kappa h)\, Z_n^2\right) g_n,
\]
where $a(\theta)$, $b(\theta)$, $c(\theta)$ are as defined before.
Writing
\[
 a(\kappa h) + b(\kappa h)\, Z_n + c(\kappa h)\, Z_n^2 = 
\exp\left(\rule{0in}{0.16in}\! -\fracs{1}{2} (1\!-\!\rho)\, \kappa^2\, t
        - i \, \kappa\, (\mu\, k \!+\! \sqrt{\rho\,k}\, Z_n) + e_n \right)
\]
we obtain, after performing lengthy expansions using MATLAB's Symbolic Toolbox,
\[
e_n = \sqrt{\rho\,  k} \ \kappa^2 \mu\, k\, Z_n
- \fracs{1}{6} i\, \sqrt{\rho\,  k} \ \kappa^3 Z_n
\left(\rule{0in}{0.16in}3\, (1\!-\!\rho)\, k + \rho\, k\, Z_n^2 - h^2\right)
 + r_n,
\]
where, for all $p\ge 1$,
\[
\EE[|r_n|^p] \le c(\kappa,p) \left(k^2 + k h^2\right)^p.
\]
When summing over $T/k$ timesteps, $\sum Z_n$ and $\sum Z^3_n$ are both 
$O(k^{-1/2})$ since they have zero expectation and $O(k^{-1})$ variance.
Hence, it follows that
\[
\left( \EE \left[ \left( \sum_n e_n \right)^2 \right]\right)^{1/2}= O(k, h^2),
\]
so the RMS error in the Fourier mode over the full simulation interval is 
$O(k, h^2)$.

%\[
%\sum_n e_n = O(k, h^2),
%\]
%and so the discretisation error over the full simulation interval is 
%$O(k, h^2)$.  

Following the method of analysis in \cite{carter}, it can be deduced from
this that the RMS $L_2$ and $L_\infty$ errors for $V_n$ 
are both $O(k,h^2)$.
This is consistent with the usual $O(k,h^2)$ accuracy of the 
forward-time central space discretisation of a parabolic PDE \cite{rm67}, 
and the $O(k)$ strong accuracy of the Milstein discretisation of an SDE, see e.g.\
\cite{kp92}.

\subsection{Convergence tests}
\label{subsec:firsttests}

We now test the accuracy and stability of the scheme numerically.

The chosen parameters for (\ref{spde}) are
$\sigma = 0.22$, $\rho = 0.2$, $r=0.042$, $\mu=(r-\sigma^2/2)/\sigma$,
$v(0,x)\!=\!\delta(x\!-\!x_0)$ with $x_0 = 5$. These values are typical for the applications later on.

The upper boundary for the computation is $x_{max}=16$, and chosen
to ensure that the use of zero Dirichlet data has negligible influence on the solution.

We approximate the initial value problem without boundary conditions on 
$[-16/3,16]$ (note $x_0=5$ is roughly in the centre), 
and also the initial-boundary value problem with zero Dirichlet conditions on
$[0,16]$.

Figure \ref{fig:solns} plots several solutions to the latter problem at $T=5$,
each corresponding to a different Brownian path $M_t$.
\begin{figure}%[p]
\begin{center}
\includegraphics[width=.6\textwidth]{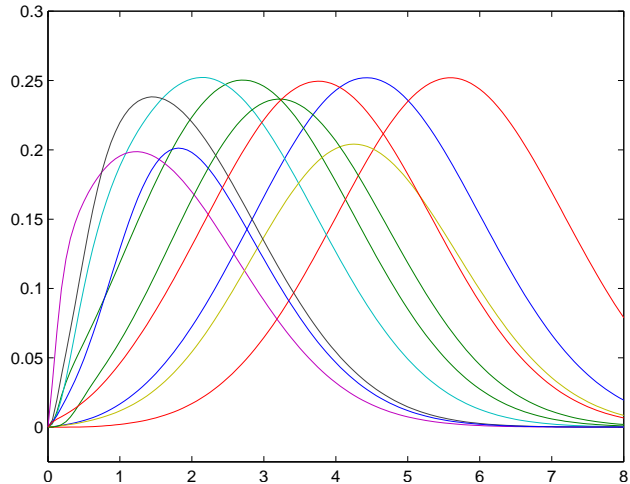}
%\hfill
%\includegraphics[width=.45\textwidth]{solsstretched05.eps}
\end{center}
\caption{Some solutions $v(T,x)$ for different driving 
         Brownian motions $W_t$.}
% (left), and in transformed coordinates $y=\sqrt{x}$, $w(T,y)$ (right).}
\label{fig:solns}
\end{figure}
It can be seen that for those realisations for which $M_t$ has been largely 
positive, the boundary at $x\!=\!0$ has had negligible 
influence and so the solution is approximately equal to the 
displaced Normal distribution given by (\ref{eq:gaussian}).

For the unbounded case, we approximate the mean-square $L_2$-error by
\begin{eqnarray}
\nonumber
E(h,k)^2 &=& \EE\left[\sum_{j=0}^J (v_j^N(\omega)-v(N k,j h;\omega))^2 \,  h\right] \\
&\approx& \frac{1}{M} \sum_{m=1}^M
\sum_{j=0}^J (v_j^N(\omega_m)-v(N k,j h;\omega_m))^2 \, h,
\label{errex}
\end{eqnarray}
where $J$ is the number of grid intervals, $N$ the number of timesteps, and
the expectation is taken over Brownian paths $\omega$, of which $\omega_m$
are $M$ samples.

To study the convergence, we introduce decreasing grid sizes 
$h_l = h_0\, 2^{-l}$ and timesteps
$k_l = k_0\, 4^{-l}$, motivated by the second order convergence in $h$
and first order convergence in $k$ as well as the stability limit for the 
explicit scheme, and denote $E_l = E(h_l,k_l)$ the mean-square $L_2$-error
at level $l$.

For the initial-boundary value problem, no analytical solution is known,
but we can compute error indicators via the difference between a fine grid 
solution $f$ with mesh parameters $k$ and $h$, 
and a coarse solution $c$ with mesh parameters $4 k$ and $2 h$,
% \begin{eqnarray}
% V(h,k) &=& \EE[\sum_{j=0}^{J/2} (f_{2j}^{N}(\omega)-c_j^{N/4}(\omega))^2 h] 
% - \sum_{j=0}^{J/2} \EE[f_{2j}^{N}(\omega)-c_j^{N/4}(\omega)]^2 h
% \\
% &\approx& \frac{1}{M} \sum_{m=1}^M
% \sum_{j=0}^{J/2} (f_{2j}^N(\omega_m)-c_j^{N/4}(\omega_m))^2 h
% - 
% \sum_{j=0}^{J/2} 
% \left(\frac{1}{M} \sum_{m=1}^M f_{2j}^N(\omega_m)-c_j^{N/4}(\omega_m) \right)^2 h
% \end{eqnarray}
\begin{eqnarray}
\nonumber
e(h,k)^2 &=& \EE\left[\sum_{j=0}^{J/2} (f_{2j}^{N}(\omega)-c_j^{N/4}(\omega))^2 \, h\right] 
\\
&\approx& \frac{1}{M} \sum_{m=1}^M
\sum_{j=0}^{J/2} (f_{2j}^N(\omega_m)-c_j^{N/4}(\omega_m))^2 \, h,
\label{errest}
\end{eqnarray}
and define $e_l = e(h_l,k_l)$.

On the coarsest level, $h_0=4/3$, $k_0=1/4$, such that $x_0$ does not coincide 
with a grid point.

Fig.~\ref{fig:unbddandbddplots} shows both the computed values of $E_l^2$ and $e_l^2$ for the unbounded case, and $e_l^2$ for the bounded case.
% \begin{figure}%[t!]
% \begin{center}
% \psfrag{explest}[l][l][0.4]{$e_l^2$, expl.}
% \psfrag{implest}[l][l][0.4]{$e_l^2$, impl.}
% \psfrag{cnest}[l][l][0.4]{$e_l^2$, C.N.}
% \psfrag{explex}[l][l][0.4]{$E_l^2$, expl.}
% \psfrag{implex}[l][l][0.4]{$E_l^2$, impl.}
% \psfrag{cnex}[l][l][0.4]{$E_l^2$, C.N.}
% \includegraphics[width=.49\textwidth]{}
% \hfill
% \psfrag{explest}[l][l][0.4]{$e_l^2$, expl.}
% \psfrag{implest}[l][l][0.4]{$e_l^2$, impl.}
% \psfrag{cnest}[l][l][0.4]{$e_l^2$, C.N.}
% \psfrag{explest2}[l][l][0.4]{$e_l^2$, expl., $x_0=1$}
% \psfrag{implest2}[l][l][0.4]{$e_l^2$, impl., $x_0=1$}
% \psfrag{cnest2}[l][l][0.4]{$e_l^2$, C.N., $x_0=1$}
% \includegraphics[width=.49\textwidth]{}
% \end{center}
% \caption{
% Mean-square error estimators for the
% explicit Milstein scheme ($\theta=0, \sigma=0$), implicit Milstein scheme ($\theta=1, \sigma=0$), and Crank-Nicolson Milstein scheme ($\theta=-1, \sigma=1/2$).
% Left: For the unbounded case, difference between true and numerical solution,
% and coarse and fine grid solutions.
% Right: For the bounded case, difference between coarse and fine grid solutions,
% for $x_0=5$ and $x_0=1$.
% }
% \label{fig:unbddandbddplots}
% \end{figure}
\begin{figure}%[t!]
\begin{center}
\includegraphics[width=.7\textwidth]{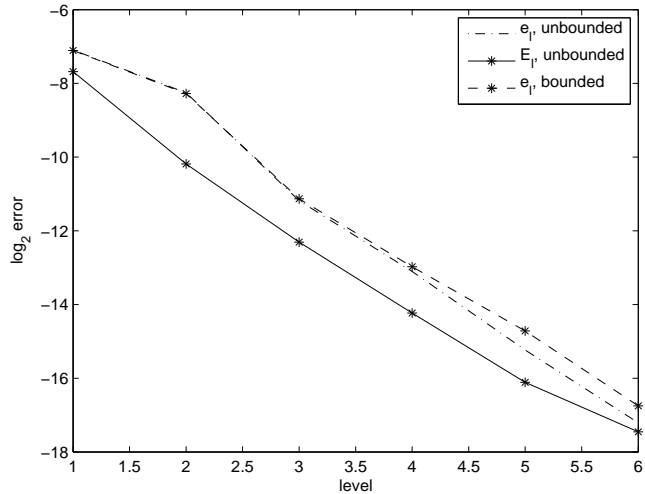}
\end{center}
\caption{
Mean-square error estimators for the
Milstein scheme.
For the unbounded case, $L_2$ difference between true and numerical solution,
$E_L$ as in (\ref{errex}),
and coarse and fine grid solutions, $e_l$ as in (\ref{errest});
for the bounded case, difference between coarse and fine grid solutions
$e_l$.
}
\label{fig:unbddandbddplots}
\end{figure}
The results confirm the theoretical $O(k,h^2)$ convergence in the unbounded case, and support the conjecture that the convergence order is unchanged in
the bounded case.

% The advantage of the implicit schemes is to allow smaller grid sizes, however,
% it turns out that the optimal mesh ration is close to the stability limit of the
% explicit scheme such that the implicit schemes give no significant efficiency
% improvement.

% The right-hand plot of Fig.~\ref{fig:unbddandbddplots} shows results for the bounded case. By comparison with the left plot,
% for the same parameters, specifically $x_0=5$, the grid surplus for computations on the bounded domain is almost identical to the unbounded case.

We now formalise this conjecture about the error due to the SPDE discretisation.
Denote $U_T = u(T,\cdot)$ the solution to the initial boundary value problem at time $T$ for a given Brownian path
and $\hU_T$ its numerical approximation with grid size $h$ and timestep
$k\propto h^2$.
\begin{conjecture}
\label{conj}
%Provided the timestep satisfies the stability limit (\ref{eq:stability}),
The error in the solution at time $T$ satisfies the strong
error estimate in the $L_2$ norm $\|\cdot \|$
\[
\sqrt{ \EE[ \|\hU_T - U_T \|^2 ] }= O(h^{2}).
\]
\end{conjecture}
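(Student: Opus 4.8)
\medskip
\noindent\textbf{Proof proposal.}
I would prove Conjecture~\ref{conj} by the classical route of a discrete Lax-type argument --- consistency plus stability, combined through a discrete variation-of-constants formula --- treating the boundary region separately, since that is the only place where the Fourier analysis of Section~\ref{subsec:fourier-acc} does not apply directly. Set $\hat E_n = \hU_n - R_h U_{t_n}$, where $R_h$ denotes restriction of the pathwise exact solution to the spatial grid (equivalently, the hat-function functional used for the initial data), and let $A_n$ be the random tridiagonal one-step amplification matrix of~(\ref{discrete-sys}). Inserting the exact solution into the scheme gives $\hat E_{n+1} = A_n \hat E_n + \tau_n$ with a local truncation error $\tau_n$, so
\[
\hat E_N = \left( \prod_{m=0}^{N-1} A_m \right) \hat E_0 \ +\ \sum_{n=0}^{N-1} \left( \prod_{m=n+1}^{N-1} A_m \right) \tau_n .
\]
It then suffices to show $\EE[\,\|\hat E_N\|^2\,] = O(h^4)$, since $U_T$ and $R_h U_T$ differ by $O(h^2)$ in $L_2$ (the usual interpolation/quadrature error, valid also for the mildly singular boundary profile).

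\medskip
\noindent\textit{Stability and summation.} For the propagated initial error and for the summation of the $\tau_n$ I would use the mean-square stability of the scheme for the initial--boundary value problem from Appendix~\ref{sec:matanal}: once $k/h^2\le(1+2\rho^2)^{-1}$, one has $\EE[\,\|\prod_{m=n_0}^{n}A_m\,w\|^2\,]\le C\,\|w\|^2$ uniformly in the number of steps. Split $\tau_n=\bar\tau_n+\Delta\tau_n$ into its $\mathcal F_{t_n}$-conditional mean (with $\mathcal F_t$ the filtration of $M$) and a martingale-increment remainder, mirroring the split of the Fourier-mode error $e_n$ in Section~\ref{subsec:fourier-acc} into a piece of size $O(k^2+kh^2)$ and terms proportional to $Z_n$ and $Z_n^3$. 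Since the later factors $A_m$, $m>n$, are conditionally independent of $\Delta\tau_n$, the cross terms vanish and $\EE[\,\|\sum_n(\prod_{m>n}A_m)\Delta\tau_n\|^2\,]\le C\sum_n\EE[\,\|\Delta\tau_n\|^2\,]$; per step this is $O(kh^4+k^3)$ (the $\sqrt{\rho k}\,\kappa^3 Z_n h^2$- and $\sqrt{\rho k}\,\kappa^2\mu k Z_n$-type terms of Section~\ref{subsec:fourier-acc}), which summed over $T/k$ steps is $O(h^4+k^2)=O(h^4)$. The triangle inequality and uniform stability give $\EE[\,\|\sum_n(\prod_{m>n}A_m)\bar\tau_n\|^2\,]\le C\left(\sum_n\|\bar\tau_n\|_{L_2}\right)^2$, and $\sum_n\|\bar\tau_n\|_{L_2}\le (T/k)\cdot C(k^2+kh^2)=O(k+h^2)$, so this too is $O(h^4)$ when $k\propto h^2$; finally the $\hat E_0$ term is $O(h^4)$ by the $O(h^2)$ hat-function projection bound \cite{pooley1,carter}.

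\medskip
\noindent\textit{Consistency near the boundary (the crux).} The real obstacle is bounding $\tau_n$ in the strip $x\in(0,\delta)$, where the regularity is reduced: only $x\,u_{xx}\in L_2$ --- with (from Krylov-type weighted estimates \cite{krylov94}) $x^{3/2}u_{xxx},\,x^{2}u_{xxxx}\in L_2$ --- so the bare central-difference remainder $\sim h^2 u_{xxxx}$ fails to be square-integrable up to $x=0$. I would decompose $U_t=U_t^{\mathrm{reg}}+U_t^{\mathrm{bl}}$, with $U_t^{\mathrm{reg}}$ having spatial derivatives up to fourth order bounded uniformly on $[0,T]$ (so that the Section~\ref{subsec:fourier-acc} estimate applies to it directly) and a boundary-layer part $U_t^{\mathrm{bl}}$ modelled on the absorbing-barrier heat kernel --- an error-function profile in $(x-\sqrt{\rho}\,M_t)/\sqrt{(1-\rho)t}$ --- whose discrete truncation error is estimated by hand: bound $\tau_j$ by the weighted derivative bounds for $x_j$ away from $0$ and by a crude pointwise bound on the $O(\delta)$-measure strip near $0$, then optimise $\delta$; this should keep $\sum_j h\,|\tau_j|^2=O(h^4)$, or at worst $O(h^{4-\eps})$ for any $\eps>0$, to be then sharpened. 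This step, together with a non-asymptotic version of the Appendix's mean-square stability of the boundary-value amplification operators --- or an argument that its $k,h\to0$ form already suffices for the Duhamel summation above, including the absorbing boundary at $j=0$ --- is where I expect most of the effort to lie. A further subtlety is that, because the boundary layer is advected by $\sqrt{\rho}\,M_t$, the spatial and temporal (Milstein) truncation errors are coupled near $x=0$, so the clean ``drift plus martingale'' split of $\tau_n$ must be re-derived in physical space with the boundary present rather than mode by mode.

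\medskip
\noindent A possibly cleaner variant would be to establish the estimate first on the whole line --- where Section~\ref{subsec:fourier-acc}, together with the passage from Fourier-mode errors to $L_2$ and $L_\infty$ errors in the manner of \cite{carter}, essentially already gives it --- and then to control the difference between the whole-line and boundary-value solutions, and between their discretisations, by a parabolic localisation argument near $x=0$; but the reduced boundary regularity has to be confronted in either approach.
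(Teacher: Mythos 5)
The statement you are asked to prove is, in the paper, explicitly a \emph{conjecture}: the authors establish the $O(k,h^2)$ mean-square error only for the pure initial value problem on $\RR$ (the Fourier analysis of Section~\ref{subsec:fourier-acc}), and for the initial-boundary value problem they offer only numerical evidence plus the matrix stability analysis of Appendix~\ref{sec:matanal}, stating in the conclusions that a Lax-equivalence argument ``could form part of a rigorous analysis'' but that the loss of regularity at the boundary leaves the question open. Your proposal follows exactly that suggested route --- mean-square stability of the amplification matrices, a discrete Duhamel summation, and a drift/martingale split of the local truncation error mirroring the expansion of $e_n$ --- and on the whole line this is sound, but there it only reproduces what the paper already proves.

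The genuine gap is the step you yourself flag as ``the crux'': consistency near $x=0$. By the Krylov regularity cited in the paper, only $x\,v_{xx}\in L_2$; the central-difference truncation error behaves like $h^2 u_{xxxx}$, and the corresponding weighted bound $x^2 u_{xxxx}\in L_2$ makes $\sum_j h\,|h^2 u_{xxxx}(x_j)|^2$ formally divergent as $x_j\to 0$, so the assertion that your boundary-layer decomposition ``should keep $\sum_j h\,|\tau_j|^2=O(h^4)$'' is precisely the open problem and is not established by the sketch (the $\delta$-optimisation is not carried out, and the error-function model of the boundary layer is only valid for the frozen-coefficient heat kernel, not for the advected stochastic profile). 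The paper's own Appendix~\ref{subsec:regularity} shows numerically that the \emph{variance} of the discrete second difference at $x=0$ blows up under grid refinement --- the very quantity entering your $\tau_n$ --- and the authors remark that even with weighted Sobolev norms ``it is not clear that convergence of the functionals of interest follows.'' A further unproved ingredient is the non-asymptotic form of the Appendix~\ref{sec:matanal} stability bound, which the paper obtains only in the limit $k,h\to 0$. In short, your write-up is a reasonable research programme that coincides with the authors' own, but it does not close the conjecture; within this paper the statement has no proof to compare against, and your argument does not supply one.
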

\begin{corollary}
%Provided the timestep satisfies the stability limit (\ref{eq:stability}),
A Lipschitz payoff function $P(U_T)$ has a similar strong error bound
\[
\sqrt{\EE[ (P(\hU_T) - P(U_T))^2 ]} = O(h^{2}).
\]
\end{corollary}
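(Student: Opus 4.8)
The plan is to obtain the bound as an immediate consequence of Conjecture \ref{conj} together with the defining property of a Lipschitz payoff. Assume $P$ is Lipschitz with respect to the $L_2$ norm, i.e.\ there is a constant $L>0$, independent of the mesh parameters and of the Brownian path, with $|P(f)-P(g)|\le L\,\|f-g\|$ for all admissible $f,g$. This is a natural hypothesis in the intended applications: for a CDO tranche the payoff is a bounded, piecewise-linear, hence Lipschitz, function of the cumulative loss, and on the truncated domain $[0,x_{max}]$ the loss depends Lipschitz-continuously on $U_T$ in $L_2$ by Cauchy--Schwarz.

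First I would note that the Lipschitz inequality holds pathwise: for almost every realisation $\omega$ of the driving motion $M$,
\[
|P(\hU_T(\omega))-P(U_T(\omega))|\ \le\ L\,\|\hU_T(\omega)-U_T(\omega)\|.
\]
Squaring and taking expectations over $\omega$ gives
\[
\EE\big[(P(\hU_T)-P(U_T))^2\big]\ \le\ L^2\,\EE\big[\|\hU_T-U_T\|^2\big]\ =\ O(h^4),
\]
the last equality being Conjecture \ref{conj}. Taking square roots yields the asserted $O(h^2)$ estimate.

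Since the derivation is essentially one line, there is no genuine obstacle in the corollary itself; the substantive work is entirely in Conjecture \ref{conj}, on which the statement is conditional. The only points deserving a moment's care are that the Lipschitz constant must be uniform in $\omega$ --- which is precisely what licenses pulling it outside the expectation --- and that $\omega\mapsto P(\hU_T(\omega))$ is measurable, which follows from continuity of $P$ and measurability of the exact and numerical solution maps. If one instead wanted a bound for a functional of the whole path $(U_t)_{t\le T}$ rather than of the terminal profile, one would additionally need a strong error estimate uniform in $t$, which is not claimed here.
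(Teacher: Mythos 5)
Your proposal is correct and is exactly the argument the paper intends: the corollary is stated as an immediate consequence of Conjecture \ref{conj}, obtained by applying the pathwise Lipschitz bound $|P(\hU_T)-P(U_T)|\le L\,\|\hU_T-U_T\|$, squaring, taking expectations, and invoking the conjectured $O(h^4)$ mean-square estimate. Your additional remarks on uniformity of the Lipschitz constant in $\omega$ and on measurability are sensible but not points the paper dwells on.
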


We know Conjecture \ref{conj} to be true for the initial value problem on $\RR$,
and it conjectures that the introduction of the boundary condition at $x\!=\!0$ 
does not affect the (weak and) strong error,
which is supported by the numerical results. 
% in Section \ref{subsec:firsttests}.

The conjecture assumes $x_{max}\!=\!\infty$; in practice, 
a finite value for $x_{max}$ will introduce an additional truncation error 
which will decay %faster than 
exponentially in $x_{max}$.

Numerical tests with different parameters, not reproduced here,
indicate that the error in the bounded case 
increases for values of $x_0$ close to 0,
in which case more paths contribute solutions with large higher derivatives 
close to the zero boundary, even if the asymptotic convergence order is still 
$O(k,h^2)$.
We analyse this loss of regularity further in Appendix \ref{subsec:regularity}.
%\subsection{Standard Monte Carlo simulation and complexity}
%\label{sec:standard-mc}

\section{Multilevel Monte Carlo simulation}
\label{sec:multilevel}

We now consider estimating the expectation of a scalar functional of the SPDE
solution, an expected ``payoff'' $P$ in a computational finance context.

Defining $\hP$ to be the payoff arising from a single SPDE approximation, the 
standard Monte Carlo approach is to average the payoff 
from $N$ independent SPDE simulations, each one using an independent vector of 
$N(0,1)$ Normal variables $Z = (Z^0, Z^1, \ldots, Z^{n-1})$.  Thus the estimator 
for the expected value is
\[
\hY = \frac{1}{N} \sum_{i=1}^N \hP^{(i)}.
\]
The mean square error for this estimator can be expressed as the sum of two 
contributions, one due to the variance of the estimator and the other due to the 
error in its expectation,
\[
\EE \left[ \left( \hY - \EE[P] \right)^2 \right] 
= N^{-1} \VV[\hP] + \left( \EE[\hP] \!-\! \EE[P] \right)^2.
\]
To achieve a r.m.s.~error of $\eps$ requires that both of these terms are 
$O(\eps^2)$.  This in turn requires that $N\!=\!O(\eps^{-2})$, $k\!=\!O(\eps)$ 
and $h\!=\!O(\eps^{1/2})$, based on the conjecture that the weak error 
$\EE[\hP] \!-\! \EE[P]$ is $O(k,h^2)$.
Since the computational cost is proportional to $N k^{-1} h^{-1}$ 
this implies an overall cost which is $O(\eps^{-7/2})$.  The aim of the multilevel 
Monte Carlo simulation is to reduce this complexity to $O(\eps^{-2})$. %$O(\eps^{-2}(\log\eps)^2)$.

Consider Monte Carlo simulations with different levels of refinement,
$l = 0, 1, \ldots, L$, with $l=0$ being the coarsest level, 
(i.e.~the largest values for $k$ and $h$) and level $L$ being the finest
level corresponding to that used by the standard Monte Carlo method.

Let $\hP_l$ denote an approximation to payoff $P$
using a numerical discretisation with parameters $k_l$ and $h_l$.  
Because of the linearity of the expectation operator, it is clearly 
true that
\begin{equation}
\EE[\hP_L] = \EE[\hP_0] + \sum_{l=1}^L \EE[\hP_l \!-\! \hP_{l-1}].
\label{eq:identity}
\end{equation}
This expresses the expectation on the finest level as being equal to
the expectation on the coarsest level plus a sum of corrections which
give the difference in expectation between simulations using 
different numbers of timesteps.
The multilevel idea is to independently estimate each of the expectations 
on the right-hand side in a way which minimises the overall variance for 
a given computational cost.

Let $\hY_0$ be an estimator for $\EE[\hP_0]$ using $N_0$ samples, and let 
$\hY_l$ for $l\!>\!0$ be an estimator for $\EE[\hP_l \!-\! \hP_{l-1}]$ 
using $N_l$ samples.  Each estimator is an average of $N_l$ independent 
samples, which for $l\!>\!0$ is
\begin{equation}
\hY_l = N_l^{-1} \sum_{i=1}^{N_l} \left( \hP_l^{(i)} \!-\! \hP_{l-1}^{(i)} \right).
\label{eq:est_l}
\end{equation}
The key point here is that the quantity $\hP_l^{(i)} \!-\! \hP_{l-1}^{(i)}$ comes 
from two discrete approximations using the same Brownian path.  
%This is easily implemented by first constructing the Brownian increments 
%for the simulation of the discrete path leading to the evaluation of $\hP_l^{(i)}$, 
%and then summing them in pairs to give the discrete Brownian 
%increments for the evaluation of $\hP_{l-1}^{(i)}$.
The variance of this simple estimator is 
$\displaystyle
\VV[\hY_l] = N_l^{-1} V_l
$
where $V_l$ is the variance of a single sample.  
Combining this with independent estimators for each of the other levels,
the variance of the combined estimator
%$\displaystyle
$
\sum_{l=0}^L \hY_l
$
is
$
\sum_{l=0}^L N_l^{-1} V_l.
$
The corresponding computational cost is
$
\sum_{l=0}^L N_l\, C_l
$
where $C_l$ represents the cost of a single sample on level $l$.
Treating the $N_l$ as continuous variables, the variance is minimised 
for a fixed computational cost by choosing $N_l$ to be proportional to
$\displaystyle \sqrt{V_l / C_l}$, with the constant of proportionality 
chosen so that the overall variance is $O(\eps^{-2})$.

The total cost on level $l$ is proportional to $\sqrt{V_l \, C_l}$.  
If the variance $V_l$ decays more rapidly with level than the cost $C_l$
increases, the dominant cost is on level $0$.  The number of samples on
that level will be $O(\eps^{-2})$ and the cost savings compared to standard
Monte Carlo will be approximately $C_0/C_L$, reflecting the different costs
of samples on level $0$ compared to level $L$.  On the other hand, if the
variance $V_l$ decays more slowly than the cost $C_l$ increases, the dominant 
cost will be on the finest level $L$, and the cost savings compared to 
standard Monte Carlo will be approximately $V_L/V_0$, reflecting the 
difference between the variance of the finest grid correction compared to 
the variance of the standard Monte Carlo estimator, which is similar to $V_0$.

This outline analysis is made more precise in the following theorem:

%\newpage

\begin{theorem}
\label{ml-theorem}
Let $P$ denote a functional of the solution of an SPDE for a given 
Brownian path $M_t$, and let $\hP_l$ denote the corresponding level $l$ 
numerical approximation.

If there exist independent estimators $\hY_l$ based on $N_l$ Monte Carlo 
samples, and positive constants 
$\alpha, \beta, \gamma, c_1, c_2, c_3$ such that 
$\alpha\!\geq\!\fracs{1}{2}\,\gamma$ and
\begin{enumerate}[label=\roman{*})]
\item
$\displaystyle
\left| \EE[\hP_l \!-\! P] \right| \leq c_1\, 2^{-\alpha\, l}
$
\item
$\displaystyle
\EE[\hY_l] = \left\{ \begin{array}{ll}
\EE[\hP_0], & l=0 \\[0.1in]
\EE[\hP_l \!-\! \hP_{l-1}], & l>0
\end{array}\right.
$
\item
\label{ml-iii}
$\displaystyle
\VV[\hY_l] \leq c_2\, N_l^{-1} 2^{-\beta\, l}
$
\item
\label{ml-iv}
$\displaystyle
C_l \leq c_3\, N_l\, 2^{\gamma\, l},
$
where $C_l$ is the computational complexity of $\hY_l$
\end{enumerate}
then there exists a positive constant $c_4$ such that for any $\eps \!<\! e^{-1}$
there are values $L$ and $N_l$ for which the multilevel estimator
\[
\hY = \sum_{l=0}^L \hY_l,
\]
has a mean-square-error with bound
\[
MSE \equiv \EE\left[ \left(\hY - E[P]\right)^2\right] < \eps^2
\]
with a computational complexity $C$ with bound
\[
C \leq \left\{\begin{array}{ll}
c_4\, \eps^{-2}              ,    & \beta>\gamma, \\[0.1in]
c_4\, \eps^{-2} (\log \eps)^2,    & \beta=\gamma, \\[0.1in]
c_4\, \eps^{-2-(\gamma\!-\!\beta)/\alpha}, & 0<\beta<\gamma.
\end{array}\right.
\]
\end{theorem}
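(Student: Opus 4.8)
The plan is to follow the now-standard multilevel complexity argument of \cite{giles1}, adapted to the present notation. First I would note that the triangle inequality splits the mean-square error into a variance term and a (squared) bias term,
\[
MSE = \VV[\hY] + \left(\EE[\hY] - \EE[P]\right)^2.
\]
By hypothesis ii) the telescoping identity (\ref{eq:identity}) gives $\EE[\hY] = \EE[\hP_L]$, so the bias term is controlled by i): $|\EE[\hP_L]-\EE[P]| \le c_1\,2^{-\alpha L}$. Hence choosing $L$ to be the smallest integer with $c_1\,2^{-\alpha L} \le \eps/\sqrt{2}$, i.e.\ $L = \lceil \alpha^{-1}\log_2(\sqrt{2}\,c_1/\eps)\rceil$, makes the squared bias at most $\eps^2/2$. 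This fixes $L = O(\log(1/\eps))$ and, importantly, gives $2^{-\alpha L} \ge \mathrm{const}\cdot\eps$, which will be used to bound $2^{(\gamma-\beta)L}$ in the slow-decay case.

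Next I would allocate the per-level sample sizes $N_l$ so that the variance term $\sum_{l=0}^L N_l^{-1}V_l \le \eps^2/2$ while minimising the cost $\sum_l N_l C_l$. Using $V_l \le c_2\,2^{-\beta l}$ and $C_l \le c_3 N_l 2^{\gamma l}$ (so the cost per sample on level $l$ is $\le c_3\,2^{\gamma l}$), the Lagrange/Cauchy--Schwarz optimisation gives $N_l$ proportional to $\sqrt{V_l/2^{\gamma l}} \propto 2^{-(\beta+\gamma)l/2}$, with the constant fixed by the constraint: explicitly
\[
N_l = \left\lceil 2\,\eps^{-2}\,\sqrt{c_2\,2^{-\beta l}\,/\,2^{\gamma l}}\ \Big(\sum_{m=0}^{L}\sqrt{c_2\,2^{-\beta m}\,2^{\gamma m}}\Big)\right\rceil.
\]
One then checks the variance constraint is met (using $\sum N_l^{-1}V_l \le \tfrac{\eps^2}{2}\big(\sum_m \sqrt{c_2} 2^{(\gamma-\beta)m/2}\big)^{-1}\sum_l \sqrt{c_2}2^{(\gamma-\beta)l/2}$, which telescopes to $\eps^2/2$) and substitutes back into the cost, obtaining $C \le \sum_l N_l C_l \le c\,\eps^{-2}\big(\sum_{l=0}^L 2^{(\gamma-\beta)l/2}\big)^2$ plus a lower-order term from the ceiling functions, of size $O(\sum_l 2^{\gamma l}) = O(2^{\gamma L})$.

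The three regimes then follow from estimating the geometric-type sum $S_L := \sum_{l=0}^{L} 2^{(\gamma-\beta)l/2}$. If $\beta>\gamma$ the sum converges, $S_L = O(1)$, giving $C = O(\eps^{-2})$; the leftover ceiling term $2^{\gamma L}=O(\eps^{-\gamma/\alpha})$ is dominated provided $\gamma \le 2\alpha$, which is exactly the hypothesis $\alpha \ge \tfrac12\gamma$. If $\beta=\gamma$, then $S_L = O(L) = O(\log(1/\eps))$, giving $C = O(\eps^{-2}(\log\eps)^2)$. If $0<\beta<\gamma$, the sum is dominated by its last term, $S_L = O(2^{(\gamma-\beta)L/2})$, and since $2^{L} = O(\eps^{-1/\alpha})$ from the choice of $L$, we get $S_L^2 = O(\eps^{-(\gamma-\beta)/\alpha})$ and hence $C = O(\eps^{-2-(\gamma-\beta)/\alpha})$; again the $2^{\gamma L}$ term is subsumed using $\alpha \ge \tfrac12\gamma$. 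I expect the main technical nuisance — not a deep obstacle — to be handling the rounding of $N_l$ to integers cleanly: the ceiling can double each $N_l$ in the worst case and contributes an additive $\sum_l C_l/N_l \le c_3\sum_l 2^{\gamma l} = O(2^{\gamma L})$ to the cost, so one must verify in each of the three cases that this term does not dominate, which is precisely where the constraint $\alpha \ge \tfrac12\gamma$ is needed and must be invoked carefully. Everything else is bookkeeping with geometric series.
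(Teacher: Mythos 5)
Your proposal is correct and follows essentially the same route as the paper, which simply defers to the complexity proof of Giles (2008a) generalised to arbitrary cost exponent $\gamma$: MSE splitting into bias and variance, choice of $L$ from the weak-error bound, sample sizes $N_l \propto \sqrt{V_l/2^{\gamma l}}$ fixed by the variance constraint, and the three regimes from the geometric sum $\sum_l 2^{(\gamma-\beta)l/2}$, with the ceiling/rounding term controlled via $\alpha \geq \fracs{1}{2}\gamma$. Nothing further is needed.
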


\begin{proof}
The proof is a slight generalisation of the proof in \cite{giles1}.
\end{proof}

In our application, we choose $h_l \propto 2^{-l}$ and $k_l\propto 4^{-l}$
so that the ratio $k_l/h_l^2$ is held fixed to satisfy the finite
difference stability condition.
The computational cost increases by factor 8 in moving from level $l$ to 
$l\!+\!1$, so $\gamma\!=\!3$.

Given that the payoff is a Lipschitz function of the loss
approximations at various dates, Conjecture \ref{conj} implies that the weak 
error is also $O(h^2)$ and so $\alpha \!=\! 2 > \fracs{1}{2}\,\gamma$.
Also, due to the triangle inequality
\begin{eqnarray*}
\sqrt{\VV[ \hP_l \!-\! \hP_{l-1} ]} 
&\leq& 
\sqrt{\VV[ \hP_l \!-\! P ]} +
\sqrt{\VV[ \hP_{l-1} \!-\! P ]} 
\\ &\leq& 
\sqrt{\EE[ (\hP_l \!-\! P)^2 ]} +
\sqrt{\EE[ (\hP_{l-1} \!-\! P)^2 ]} ,
\end{eqnarray*}
Conjecture \ref{conj} and its corollary give $\beta \!=\! 4$.
Consequently, the computational cost to achieve a r.m.s.~error of $\eps$
is $O(\eps^{-2})$.

\section{Numerical experiments}
\label{sec:results}

In this section we study the numerical performance of the algorithms presented earlier on the example of CDO tranche pricing in a large basket limit.

\subsection{CDO pricing in the structural credit model}
\label{sec:model}

Basket credit derivatives provide protection against the default of a 
certain segment (`tranche') of a basket of firms. A typical example is that of a 
\emph{collateralized debt obligation} where the protection buyer 
receives a notional amount, minus some recovery proportion $0\le R\le 1$,
if firms in a specified tranche of the basket 
default, and in return pays a regular spread until the default event 
occurs.
%The loss variable is therefore given by
%\begin{equation}
%\label{portfolio-loss}
%L_t = (1\!-\!R)\, \frac{1}{N} \sum_{i=1}^N {1}_{\{\tau_i\le t\}},
%\end{equation}

The arbitrage-free spread depends crucially on the
(risk-neutral, when hedged with defaultable bonds) probability of joint 
defaults. 
For a tranche with attachment point $0\le a < 1$ and detachment point $1\ge d>a$, the outstanding tranche notional
\begin{equation}
\label{tranche-notional}
P(L_t) = \max(d-L_t,0) - \max(a-L_t,0),
\end{equation}
%and tranche loss
%\[
%Y_t = (d-a) - X_t = \max(L_t-a,0) - \max(L_t-d,0)
%\]
where the loss variable $L_t$ is the proportion of losses in the basket up to time $t$,
%given as in (\ref{loss-var}),
determines the spread and default payments related to that tranche.
 The risk-neutral value of the tranche spread can be derived as
\begin{equation}
\label{tranche-spreads}
s = \frac{\sum_{i=1}^{n}
\e^{-r T_{i}} \mathbb{E}^{\mathbb{Q}}[P(L_{T_{i-1}}) - P(L_{T_{i}})]}
{\delta \sum_{i=1}^{n} \e^{-r T_i} \mathbb{E}^{\mathbb{Q}}[P(L_{T_{i}})]},
\end{equation}
 see, e.g., \cite{bush}. Here,
$n$ is the maximum number of spread payments, $T$ the expiry,
$T_i$ the payment dates for $1\le i\le n$,
$\delta = 0.25$ the interval between payments.
Spreads are quoted as an annual payment, as a ratio of the notional, but assumed to be paid quarterly.
\footnote{There is a variation for the equity tranche, and recently sometimes the mezzanine tranche, but we do not go into details here.}

For an extensive survey of credit derivatives and pricing models we refer the
reader to \cite{schoenbucher}, and note only that they typically fall in 
one of two classes:
so-called reduced-form models, which model default times of firms directly as
(dependent) random variables; structural models, which capture
the evolution of the firms' values, and model default events as the first 
passage of a lower default barrier. We will consider the latter here.

In a structural model in the spirit of the classical works of \cite{merton} and \cite{black},
as extended to multiple firms e.g.\ in the work of \cite{hull},
a company $i$'s firm value, $1\le i\le N$, is assumed to follow a model of the type
\[
\frac{\dA_t^i}{A_t^i} = r \dt + \sqrt{1 \!-\! \rho_i}\ \sigma_i \dW_t^i + \sqrt{\rho_i}\ \sigma_i
\dM_t, \qquad A_0^i = a^i,
\]
where $0\le \rho_i\le 1$ is a correlation parameter,
%or variations and extensions thereof, where 
$r$ is the risk-free interest rate, $\sigma_i$ the volatility, and $M$, $W^i$  are standard Brownian motions.

Here, the individual firms are correlated through a common `market' factor $M$, but independent conditionally on $M$.
We make in the following the assumption that the firms are \emph{exchangeable} in the sense that their dynamics is governed by the same set of parameters, specifically
$\rho=\rho_i$, $\sigma=\sigma_i$. Their initial values $a_i$ are not necessarily identical which
allows for different default probabilities for individual firms, consistent with their CDS spreads.

In this framework, the default time $T_0^i$ of the $i$-th firm is modelled as the first hitting time of a default barrier $B^i$, for simplicity constant, and
the \emph{distance-to-default}
\begin{equation}
\label{distance-to-default}
X_t^i = \frac{1}{\sigma}\left(\,\log A_t^i - \log B^i \,\right),
\end{equation}
evolves according to (\ref{sdesys}), where
$\mu=\left(r- \frac{1}{2} \sigma^2 \right)/\sigma$,
$x^i= \left(\,\log a^i - \log B^i \,\right)/\sigma$.
$T^i_0$ as in (\ref{defaulttime}) is precisely the default time.

%For $N=1$, \cite{black} derive formulae for credit spreads by using the analytically known densities. This extended to $N=2$ in \cite{zhou} and \cite{haworth08}, and via
%the numerical solution of PDEs when $N$ is low in \cite{haworth07}.

In the majority of applications, Monte Carlo simulation of the 
firm value processes is used for the estimation of tranche spreads,
see e.g.\ \cite{hull} or \cite{fouque, carmona}.
This is largely enforced by the size of $N$,
for instance in the case of index tranches $N=125$.

This, however, puts the model precisely in the realm of the large basket approximation
(\ref{spde}).
See \cite{bujok} for a numerical study of this large basket approximation.
%CDOs slice up the losses into tranches, defined by their attachment and detachment points. %Typical numbers are 0-3\%, 3\%-6\%, 6\%-9\%, 9\%-12\%, 12\%-22\%, 22\%-100\%.
%Assume the notional to be 1, $s$ the spread payment, $n$ the maximum number of payments up to expiry $T$
%of the CDO, $T_i$ the payment dates for $1\le i\le n$, $\delta = 0.25$ the interval between payments.
%The expected sum of discounted fee payments, referred to as the fee leg, is given by
%\begin{equation}
%\label{cdofeeleg}
%s V^{fee} = s \sum_{i=1}^{n} \delta \e^{-r T_i}
%\mathbb{E}^{\mathbb{Q}}[X_{T_{i}}],
%\end{equation}
%the protection leg is
%\begin{equation}
%\label{cdoprotectionleg}
%V^{prot} = \sum_{i=1}^{n}
%\e^{-r T_{i}} \mathbb{E}^{\mathbb{Q}}[X_{T_{i-1}} - X_{T_{i}}].
%\end{equation}
%The fair spread payment is then given by
%\begin{equation}
%\label{tranche-spread}
%s = \frac{V^{prot}}{V^{fee}}.
%\end{equation}
%
%The building blocks for computing the spreads are of the form
%\begin{equation}
%\label{expected-tranche-notional}
%\mathbb{E}^{\mathbb{Q}}[X_{T_{i}} ] = \mathbb{E}^{\mathbb{Q}}[\max(d-L_{T_i},0) - \max(a-L_{T_i},0)]
%\end{equation}
%with $L$ given by (\ref{portfolio-loss}).
%
The loss functional is thereby approximated by
\[
\hL_{T_i} = (1-R) \left(1- h \sum_{j=1}^{J-1} v_j^{T_i/k}\right)
\]
in terms of the numerical SPDE solution $v_j^{T_i/k}$ at time $T_i$,
which feeds into the estimator for the outstanding tranche notional (\ref{tranche-notional})
and subsequently the tranche spreads (\ref{tranche-spreads}).

%The outstanding tranche notional at time $t$ is 
%\begin{equation}
%P(\hL_t) = \max(d-\hL_t,0) - \max(a-\hL_t,0)
%\end{equation}
%with loss $L$, attachment point $a$ and detachment point $d>a$.

%The tranche spread is then
%\begin{equation}
%s = \frac{\sum_{i=1}^{n}
%\e^{-r T_{i}} \mathbb{E}^{\mathbb{Q}}[P(\hL_{T_{i-1}}) - P(\hL_{T_{i}})]}
%{\delta \sum_{i=1}^{n} \e^{-r T_i} \mathbb{E}^{\mathbb{Q}}[P(\hL_{T_{i}})]}
%\end{equation}
%if $n$ the maximum number of payments, $T$ the expiry,
%$T_i$ the payment dates for $1\le i\le n$,
%$\delta = 0.25$ the interval between payments.
%

\subsection{Pricing results}
\label{subsec:pricing}

All following results are for a representative set of parameter values, taken from a calibration performed in \cite{bush} to 2007 market data,
$\sigma = 0.22$, $\rho = 0.2$, $r=0.042$, $\mu=(r-\sigma^2/2)/\sigma$.
While the initial distribution used in \cite{bush} is somewhat spread out to match individual
CDS spreads of obligors, most of the mass is around $x_0 = 5$ and for
simplicity we centre all firms there for the numerical tests, $v(0,x)\!=\!\delta(x\!-\!x_0)$.
The upper boundary is chosen as $x_{max}=16$. 

We consider a maturity  $T=5$ and tranches
$[a,d] = [0,0.03]$, $[0.03,0.06]$, $[0.06,0.09]$, $[0.09,0.12]$, $[0.12,0.22]$, $[0.22,1]$.

Figure \ref{fig:spde_results} shows the multilevel results for the 
expected protection payment from the first tranche,
\[
\sum_{i=1}^{n} \e^{-r T_{i}} \mathbb{E}^{\mathbb{Q}}[P(\hL_{T_{i-1}}) - P(\hL_{T_{i}})],
\]
expressed as a fraction of the initial tranche notional.

We pick mesh sizes $h_l = h_0 \, 2^{-l}$ and $k_l = k_0 \, 4^{-l}$ for $l>0$ and $h_0=8/5$, $k_0=1/4$.
This is motivated by the second and first order consistency in $h$ and $k$ respectively, and within the stability limit (\ref{expl-stab-lim}).

\begin{figure}%[t!]
\begin{center}
\includegraphics[width=0.95\textwidth,height=\textwidth]{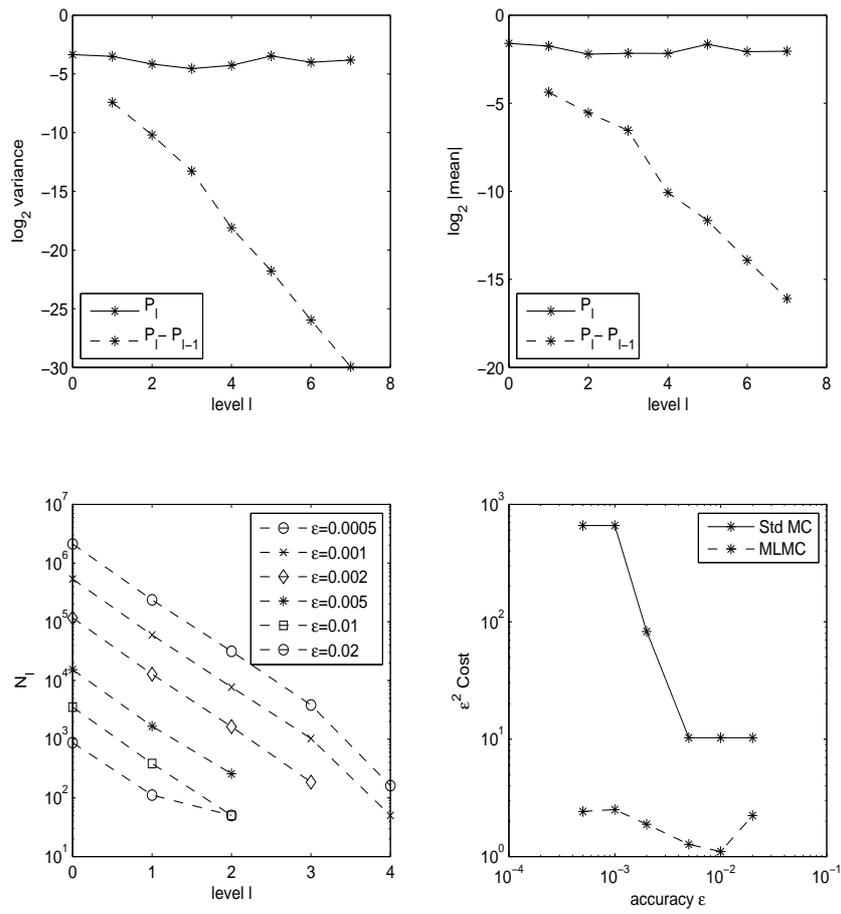}
\end{center}
\caption{Multilevel results for the expected loss from the first tranche.}
\label{fig:spde_results}
\end{figure}

The top-left plot shows the 
convergence of the variance $\VV[\hP_l \!-\! \hP_{l-1}]$ as well as the
variance of the standard single level estimate, while the top-right 
plot shows the convergence of the expectation $\EE[\hP_l \!-\! \hP_{l-1}]$.

The bottom-left plot shows results for different multilevel calculations,
each with a different user-specified accuracy requirement.  For each value 
of the accuracy $\eps$, the multilevel algorithm determines
(by formula (12) from \cite{giles1}) the numbers of
levels of refinement which are needed to ensure that the contribution to 
the Mean Square Error (MSE) due to the weak error on the finest grid is less
than $\fracs{1}{2} \eps^2$, and it determines (by formula (10) from \cite{giles1})
the optimal number of
samples on each level so that the combined variance of the multilevel 
estimate is also less than $\fracs{1}{2} \eps^2$, and hence the MSE is 
less than $\eps^2$.

Treating a single finite difference calculation on the coarsest level 
as having a unit cost, the bottom-right plot compares the total cost
of the standard and multilevel algorithms.  Since the objective is to 
achieve a computational cost which is approximately proportional to 
$\eps^{-2}$, it is the cost $C$ multiplied by $\eps^2$ which is plotted 
versus $\eps$.  The results confirm that $\eps^2 C$ does not vary much
as $\eps\rightarrow 0$ for the multilevel method, in line with the 
prediction that %$\eps^2 C \propto (\log \eps)^2$
$\eps^2 C \propto 1$, whereas it
grows significantly for the standard method since
$\eps^2 C \propto 8^L$, where $L$ is the index of the finest level.
Hence, there is a big jump in the cost of the standard method each time 
it is necessary to switch to a finer level to ensure the weak error 
is less than $\eps/\sqrt{2}$, whereas the jump is minimal
for the multilevel method.

%\subsection{Loss functional and accuracy}

%After performing the SPDE simulation, we obtain an approximation to the loss 
%functional at time $T_i$ as
%\[
%\hL_{T_i} = (1-R) \left(1- h \sum_{j=1}^{J-1} v_j^{T_i/k}\right).
%\]

%\subsection{Discrete default monitoring}

In practice, default is not monitored 
continuously but only at a discrete set of times, and for pricing often the
simplifying assumption is made that default is only determined at the spread
payment dates $T_l$.

This can be incorporated as follows.
%i.e.\ the default boundary condition is only active at those dates.
In the time intervals $(T_{l-1},T_l)$,
we solve (\ref{spde}) on a sufficiently large domain (e.g.\ $[-4,16]$),
and apply the following interface conditions at $T_l$:
\begin{equation}
\label{discont}
\lim_{t\downarrow T_l} v(t,x) = 
\left\{
\begin{array}{rl}
0, & x\le 0, \\
\lim_{t\uparrow T_l} v(t,x), & x > 0.
\end{array}
\right.
\end{equation}

To maintain quadratic grid convergence in spite of the discontinuity 
introduced by (\ref{discont}), we choose the mesh such that a grid point
coincides with the 0 boundary (e.g.\ by setting $h_0=2$ in the above example),
and set the numerical solution after default monitoring
to 0 for grid coordinates below 0 and to
$1/2$ its previous value at 0, see e.g.~\cite{pooley1}.

It is seen in Fig.~\ref{fig:spde_results_discr} that convergence is very similar
to the previous case.
\begin{figure}%[t!]
\begin{center}
\includegraphics[width=0.95\textwidth,height=\textwidth]{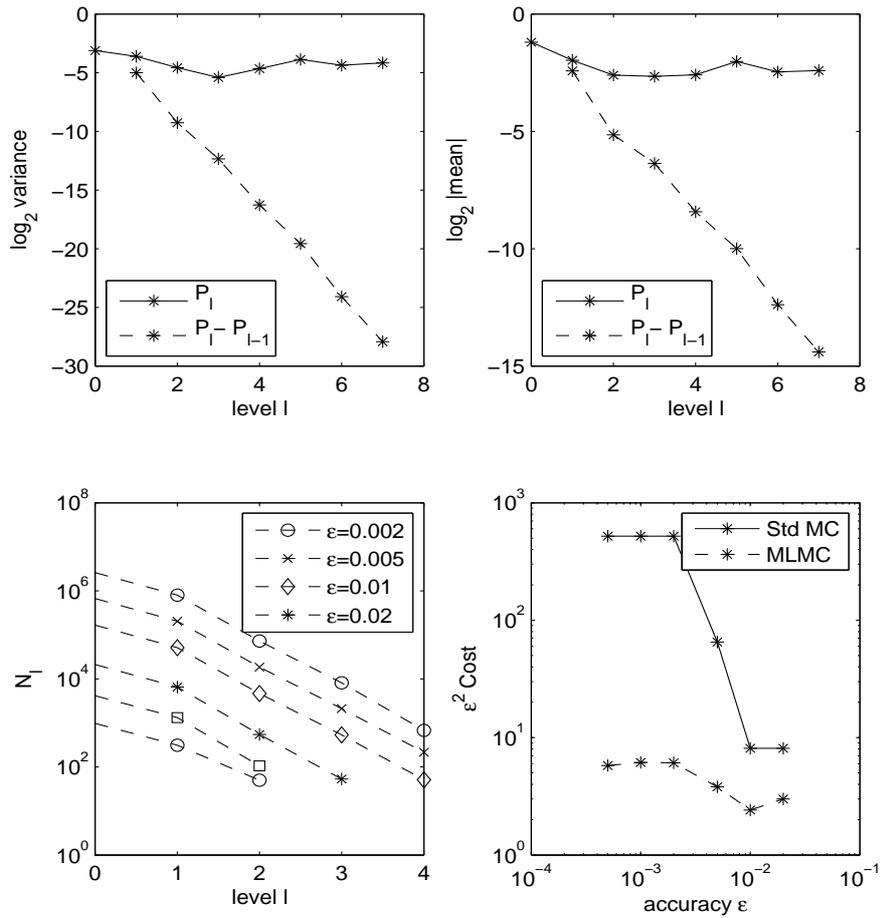}
\end{center}
\caption{Multilevel results for the expected loss from the first tranche,
with discrete monitoring.}
\label{fig:spde_results_discr}
\end{figure}

%
%
%\subsection{Optimal refinement strategies}
%
%convergence order of $h^2$ for smooth solutions not easily restored
%
%modify our refinement strategy
%based on the following conjecture about the strong convergence order, which is a refinement of Conjecture \ref{conj2}
%
%\begin{conjecture}
%\label{conj3}
%Within the respective stability ranges, the error in the loss functional approximation at time $T$ satisfies the strong
%error estimate
%\[
%\sqrt{ \EE[ (\hL_T - L_T)^2 ] }= O(h^{3/2}) + O(k).
%\]
%\end{conjecture}
%
%Conjecture \ref{conj3} motivates the refinement strategy
%$h_l = h_0 \, 4^{-l}$ and $k_l = k_0 \, 8^{-l}$ for $l>0$.
%Such a refinement is possible for the implicit schemes in parameter regimes which give unconditional stability (i.e.\ $\rho$ small enough).
%
%Then, from Conjecture \ref{conj3}, $\beta=6$ in Assumption \ref{ml-iii} of Theorem \ref{ml-theorem},
%and clearly $\gamma = 5$ in \ref{ml-iv}. Therefore the case $\beta>\gamma$ holds in the complexity estimate
%of Theorem \ref{ml-theorem}, with optimal order of complexity $C\le c_4 \eps^{-2}$ for error
%$\eps$.
%
%
%test numerically...
%

\section{Conclusions}
\label{sec:conclusions}

\subsection{Complexity and cost}
\label{sec:cost}

Here we discuss the computational complexity of the multilevel solution 
of the SPDE compared with the alternative use of the multilevel method for 
solving the SDEs which arise from directly simulating a large number of 
SDEs.

We have already explained that to achieve a r.m.s.~accuracy of $\eps$
requires $O(\eps^{-7/2})$ work when solving the SPDE by the standard 
Monte Carlo method, but the cost is $O(\eps^{-2})$ using
the multilevel method, provided Conjecture \ref{conj} is correct.

Consider now the alternative of using a finite number of particles (firms), $M$,
to estimate the tranche loss in the limit of an infinite number of particles
(firms).
In this case, empirical results suggest that there is an
additional $O(M^{-1})$ error (see also \cite{bujok}), and
the proof of this convergence order is the subject of current research.
Taking this to be the case,
the optimal 
choice of $M$ to minimise the computational complexity to achieve an r.m.s.~error 
of $\eps$ is $O(\eps^{-1})$.  Using the standard Monte Carlo method, 
the optimal timestep is $O(\eps)$, and the optimal number of paths is 
$O(\eps^{-2})$, so the overall cost is $O(\eps^{-4})$.
Using the multilevel method for the SDEs reduces the cost per 
company to $O(\eps^{-2})$, so the total cost is $O(\eps^{-3})$.

This complexity information is summarised in Table \ref{table:comparison}.
There is also a practical implementation aspect to note.  The computational 
cost per grid point in the finite difference approximation of the SPDE is
minimal, requiring just three floating point multiply-add operations if 
equation (\ref{discrete}) is re-cast as
\[
v_j^{n+1} = a v_{j-1}^n + b v_{j}^n + c v_{j+1}^n 
\]
with the coefficients $a, b, c$ computed once per timestep, for all $j$.

If we let $C$ be the cost of generating all of the Gaussian random numbers 
$Z_n$ for a single SPDE simulation,
then the cost of the rest of the finite difference calculation with 20 points
in $x$ (as used on the coarsest level of our multilevel calculations)
is probably similar, giving a total cost of $2C$ for each SPDE.
On the other hand, each SDE needs its own Gaussian random numbers for
the idiosyncratic risk, and so the cost of simulating \emph{each} SDE is 
approximately $C$, roughly half of the cost of the SPDEs on the coarsest 
level of approximation, giving a total cost of $M \, C$.

\begin{table}
\caption{Comparison of the complexity of the SDE and SPDE models using 
the standard and multilevel Monte Carlo methods}
\label{table:comparison}
\begin{center}
\begin{tabular}{|c|c|c|}\hline
\rule{0in}{0.16in} Method / model &   SDE          &    SPDE                   \\ \hline
\rule{0in}{0.16in} Standard MC    & $O(\eps^{-4})$ & $O(\eps^{-7/2})$          \\ \hline
\rule{0in}{0.16in} Multilevel MC  & $O(\eps^{-3})$ & $O(\eps^{-2})$\\ \hline
\end{tabular}
\end{center}

\end{table}

\subsection{Further work}

We have shown that stochastic finite differences combined with a multilevel simulation approach achieve optimal complexity for the computation of expected payoffs of an SPDE model.
In the case of an absorbing boundary, the complexity estimate is a conjecture in so far it relies on the convergence order of the finite difference scheme, which does not follow from the Fourier analysis of the unbounded case.
The matrix stability analysis in Appendix \ref{sec:matanal} could form part of a rigorous analysis if a Lax equivalence theorem could be proved. In the case of multiplicative white noise this is shown in \cite{lang10}.
One difficulty in the present case of an SPDE with stochastic drift is the loss of regularity towards the boundary, which may be accounted for by weighted Sobolev norms of the solution, but even then it is not clear that convergence of the functionals of interest follows.

There are several  possible extensions of the present basic model
as discussed in \cite{bujok}, ranging from stochastic volatility and
jump-diffusion to contagion models.  The methods developed in this paper
should be of use there also, building for example on multilevel versions
of jump-adapted discretisations for jump-diffusion SDEs \cite{platen,xia}.
%For Poisson jumps in the driving factor, with constant jump rate,
%the generalisation would be straightforward.
%Equally, stochastic volatility of the driving processes, as in \cite{fouque},
%would be a conceptually simple and practically potentially useful extension.

%\newpage

%\newpage

\appendix
\section{Mean-square matrix stability analysis}
\label{sec:matanal}

If $V_n$ is the vector with elements $v_j^n, j=1, \ldots, J\!-\!1$ then the
finite difference equation can be expressed as
\begin{eqnarray*}
V_{n+1} &=& (A + B\, Z_n + C\,Z_n^2)\ V_n, \\[0.1in]
A &=& I - \frac{\mu\, k}{2h}\, D_1 + \frac{(1\!-\!\rho)\, k}{2h^2}\, D_2, \\[0.05in]
B &=& -\, \frac{\sqrt{\rho\, k}}{2h}\, D_1, \\[0.05in]
C &=&     \frac{\rho\, k}{2h^2}\, D_2,
\end{eqnarray*}
where $I$ is the identity matrix and $D_1$ and $D_2$ are the matrices corresponding 
to central first and second differences, which for 
%$J=10$ are
%\[
%D_1 =
%\left(\begin{array}{rrrrrrrrr}
%  0&1&&&&&&& \\
%-1&0&1&&&&&& \\
%&-1&0&1&&&&& \\
%&&-1&0&1&&&& \\
%&&&-1&0&1&&& \\
%&&&&-1&0&1&& \\
%&&&&&-1&0&1& \\
%&&&&&&-1&0&1 \\
%&&&&&&&-1&\ \ 0
%\end{array}\right),
%\]
%\[
%D_2 =
%\left(\begin{array}{rrrrrrrrr}
% -2&1&&&&&&& \\
%1&-2&1&&&&&& \\
%&1&-2&1&&&&& \\
%&&1&-2&1&&&& \\
%&&&1&-2&1&&& \\
%&&&&1&-2&1&& \\
%&&&&&1&-2&1& \\
%&&&&&&1&-2&1 \\
%&&&&&&&1&-2
%\end{array}\right).
%\]
$J=6$ are
\[
D_1 =
\left(\begin{array}{rrrrr}
  0&1&&& \\
-1&0&1&& \\
&-1&0&1& \\
&&-1&0&1 \\
&&&-1&\ \ 0
\end{array}\right), \quad
D_2 =
\left(\begin{array}{rrrrr}
 -2&1&&& \\
1&-2&1&& \\
&1&-2&1& \\
&&1&-2&1 \\
&&&1&-2
\end{array}\right).
\]

From the recurrence relation we get
\begin{eqnarray*}
\EE[\, V_{n+1}^T V_{n+1} ] 
&=& \EE\left[ V_n^T (A^T + B^T\, Z_n + C^T\, Z_n^2)(A + B\, Z_n + C\, Z_n^2)\  V_n \right] \\[0.1in]
&=& \EE\left[ V_n^T \left( (A\!+\!C)^T (A\!+\!C) + B^T B + 2\, C^T C \,\right) V_n \right].
\end{eqnarray*}

Noting that $D_1$ is anti-symmetric and $D_2$ is symmetric, and that
\[
D_1 D_2 - D_2 D_1 = E_1 - E_2, \quad
D_1^2 = D_3 + E_1 + E_2,
\]
where $D_3$ corresponds to a central second difference with twice the usual span,
%\[
%D_3 =
%\left(\begin{array}{rrrrrrrrr}
% -3&0&1&&&&&& \\
%0 &-2&0&1&&&&& \\
%1&0&-2&0&1&&&& \\
%&1&0&-2&0&1&&& \\
%&&1&0&-2&0&1&& \\
%&&&1&0&-2&0&1& \\
%&&&&1&0&-2&0&1 \\
%&&&&&1&0&-2&0 \\
%&&&&&&1&0&-3
%\end{array}\right).
%\]
%\[
%E_1 =
%\left(\begin{array}{rrrrrrrrr}
%2&&&&&&&& \\
%&&&&&&&& \\
%&&&&&&&& \\
%&&&&&&&& \\
%&&&&&&&& \\
%&&&&&&&& \\
%&&&&&&&& \\
%&&&&&&&& \\
%&&&&&&&&
%\end{array}\right), \quad
%E_2 =
%\left(\begin{array}{rrrrrrrrr}
%&&&&&&&& \\
%&&&&&&&& \\
%&&&&&&&& \\
%&&&&&&&& \\
%&&&&&&&& \\
%&&&&&&&& \\
%&&&&&&&& \\
%&&&&&&&& \\
%&&&&&&&& 2
%\end{array}\right).
%\]
\[
D_3 =
\left(\begin{array}{rrrrr}
 -3&0&1&& \\
0 &-2&0&1& \\
1&0&-2&0&1 \\
&1&0&-2&0 \\
&&1&0&-3
\end{array}\right)
\]
(with the end values of $-3$ being chosen to correspond to 
$V_{-1}\equiv - V_1$ and $V_{J+1}\equiv - V_{J-1}$),
and $E_1$ and $E_2$ are each entirely zero apart from one corner element,
\[
E_1 =
\left(\begin{array}{rrrrr}
2&&&& \\
&&&& \\
&&&& \\
&&&& \\
&&&&
\end{array}\right), \qquad
E_2 =
\left(\begin{array}{rrrrr}
&&&& \\
&&&& \\
&&&& \\
&&&& \\
&&&& 2
\end{array}\right),
\]
then after some lengthy algebra we get
\begin{eqnarray*}
\lefteqn{ \EE\left[ V_n^T \left( (A\!+\!C)^T (A\!+\!C) + B^T B + 2\, C^T C\,\right) V_n \right]} \\
& =& \EE\left[ V_n^T M V_n \right]
 - \left( e_1 + e_2 \right) \EE[(v_1^n)^2]
 - \left( e_1 - e_2 \right) \EE[(v_{J-1}^n)^2],
\end{eqnarray*}
where
\[
M = I - \frac{k}{h^2}\, D_2 + \frac{k^2}{4\,h^4}\, D_2^2
- \left( \frac{\rho k}{4\,h^2} + \frac{\mu^2 k^2}{4\,h^2} \right) D_3,
\]
and
\[
e_1 =  \frac{\rho k}{2\,h^2} + \frac{\mu^2 k^2}{2\,h^2}, \qquad
e_2 = \frac{\mu k^2}{2\,h^3}.
\]

It can be verified that the $m^{th}$ eigenvector of $M$ has elements
$\sin j \theta_m$ for $\theta_m = m\, \pi / J$,
and the associated eigenvalue is 
\[
|a(\theta_m)\!+\!c(\theta_m)|^2 + |b(\theta_m)|^2 + 2 |c(\theta_m)|^2,
\]
where $a(\theta), b(\theta), c(\theta)$ are the same functions as defined in the 
mean-square Fourier analysis.  In addition, in the limit $h, k/h\rightarrow 0$,
$e_1 \!\pm\! e_2 > 0$, and therefore in this limit the Fourier stability condition
\[
\sup_\theta \left\{ |a(\theta)\!+\!c(\theta)|^2 + |b(\theta)|^2 + 2 |c(\theta)|^2 \right\}
\leq 1
\]
is also a sufficient condition for mean-square matrix stability.

\section{Regularity considerations}
\label{subsec:regularity}

Figure \ref{fig:conv} shows the convergence behaviour as the computational
grid is refined.  Level 0 has $h \!=\! 1/4, k \!=\! T/4$; $h$ is reduced by factor 2
and $k$ by factor $4$ in moving to finer levels. 
\begin{figure}%[p]
\begin{center}
\includegraphics[width=.8\textwidth]{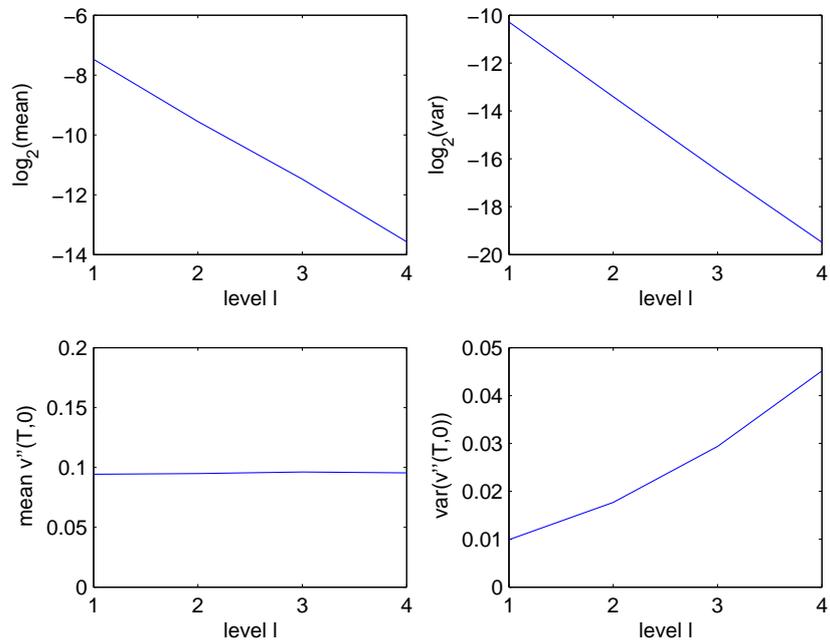}
\end{center}
\caption{Convergence plots as a function of grid level}
\label{fig:conv}
\end{figure}

The top-left plot
shows the convergence of $\EE[\hP_{l}-\hP_{l-1}]$,
with notation as in Sections \ref{sec:multilevel} and \ref{subsec:pricing};
%the results support the conjecture
%that this error, and hence also the weak error $\EE[\hP_l-P]$, are $O(h^2)$.
the top-right plot shows the convergence of $\VV[\hP_{l}-\hP_{l-1}]$.
%; the results support the conjecture that this error is $O(h^3)$.

The bottom two plots show the behaviour of $\partial^2 v/\partial x^2(T,0)$,
which is estimated on each grid level using the standard second difference
\[
\frac{\partial^2 v}{\partial x^2}(0) \approx \frac{v_2 - 2v_1 + v_0}{h^2}.
\]
The left plot indicates that the mean of this quantity is well behaved, but the 
right plot indicates a singular behaviour of its variance, with the value 
increasing rapidly with increased grid resolution.

%This is the
%reason for the variance $\VV[\hP_{l}-\hP_{l-1}]$ being $O(h^3)$ rather than
%the $O(h^4)$ which might be expected based on standard error estimates for 
%SDEs and PDEs.
This is in accordance with the result shown in \cite{krylov94}, that for
the unique solution $v$ to the SPDE, $x\, v_{xx}$ is square integrable, but
$v_{xx}$ has a singularity at 0.

\end{document}